\def\R{{\mathbb R}}
\def\I{{\mathfrak I}}
\def\e{{\rm e}}
\def\d{{\rm d}}
\def\i{{\rm i}}
\def\m{{\mathfrak n}}
\def\AA{{\mathbb{A}}}
\newtheorem{proposition}{Proposition}
\newtheorem{theorem}[proposition]{Theorem}
\newtheorem{lemma}[proposition]{Lemma}
\theoremstyle{definition}
\newtheorem*{remark}{Remark}
\newtheorem*{notation}{Notation}
\def \au {\rm}
\def \ti {\it}
\def \jou {\rm}
\def \bk {\it}
\def \no#1#2#3 {{\bf #1} (#3), #2.}
\def \eds#1#2#3 {#1, #2, #3.}
\title[A quantitative Riemann-Lebesgue lemma]
{A quantitative Riemann-Lebesgue lemma\\
with application to equations with memory}
\author[F. Dell'Oro, E. Laeng and V. Pata]{Filippo Dell'Oro, Enrico Laeng and Vittorino Pata}
\address{Politecnico di Milano - Dipartimento di Matematica
\newline\indent
Via Bonardi 9, 20133 Milano, Italy}
\email{filippo.delloro@polimi.it {\rm (F. Dell'Oro)}}
\email{enrico.laeng@polimi.it {\rm (E. Laeng)}}
\email{vittorino.pata@polimi.it {\rm (V. Pata)}}
\subjclass[2010]{42A38, 37L15, 45K05}
\keywords{Half Fourier transform, Riemann-Lebesgue lemma, equations with memory, semiuniform stability,
decay rates}
\begin{document}

\begin{abstract}
An elementary proof of a quantitative version of the Riemann-Lebesgue lemma for functions supported
on the half line is given. Applications to differential models with memory are discussed.
\end{abstract}

\maketitle


\section{Statement of the Result}

\noindent
Let $f\in L^1(\R^+)$ be a real function, absolutely continuous on $\R^+=(0,\infty)$, and
whose derivative $f'$ is summable in a neighborhood of infinity (hence
$f'\in L^1(x,\infty)$ for every $x>0$).
The aim of this note is to provide an elementary proof of the asymptotic expansion as $\lambda\to\infty$
of the half Fourier transform
$$\hat f(\lambda)=\int_0^\infty \e^{-\i \lambda s} f(s) \,\d s$$
depending only on the behavior of $f$ in a neighborhood of zero, establishing a quantitative version
of the Riemann-Lebesgue lemma for functions supported on the half line.
The analogue for the Laplace transform is the well-known initial value theorem (see e.g.\ \cite{WID}).
For the Fourier counterpart, however, we could not locate a precise reference, although
similar results are mentioned in \cite{BGT}.
Such an asymptotic expansion turns out to be extremely useful,
for instance, when dealing with differential equations
containing convolution terms, as shown in the last part of the work.

\begin{notation}
For every $p\in[0,1)$, we define the number
$$\m(p)=-\i\e^{\frac{\i p\pi}2}\Gamma(1-p),$$
where
$\Gamma(x)=\int_0^\infty s^{x-1}\e^{-s}\,\d s$
is the Euler Gamma-function.
\end{notation}

\begin{theorem}
\label{UNO}
Let $p\in[0,1)$. Assume that
\begin{equation}
\label{EE}
\lim_{s \to 0} s^p f(s)=\ell\in\R
\end{equation}
and
\begin{equation}
\label{AA}
\limsup_{x \to 0} x^p \int_x^\infty |f'(s)|\,\d s<\infty.
\end{equation}
Then
$$\lim_{\lambda \to \infty} \lambda^{1-p}
\hat f(\lambda)= \ell\,\m(p).
$$
\end{theorem}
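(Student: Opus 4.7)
The strategy is to compare $\hat f(\lambda)$ with the model transform $\ell\int_0^\infty\e^{-\i\lambda s}s^{-p}\,\d s$. Under the scaling $u=\lambda s$ the model equals $\ell\lambda^{p-1}\int_0^\infty\e^{-\i u}u^{-p}\,\d u$, and a contour rotation from the positive real axis to the negative imaginary ray, combined with the definition of $\Gamma(1-p)$, identifies the $u$-integral with $\m(p)$. (Jordan's lemma kills the large arc because $p>0$; the small arc at $0$ vanishes because $p<1$.) Hence the model produces precisely the expected leading term $\ell\lambda^{p-1}\m(p)$, and the task reduces to controlling the discrepancy between $f(s)$ and $\ell s^{-p}$ near the origin.

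Assume first $p\in(0,1)$, fix $R>1$, and split $\hat f(\lambda)=I_1(\lambda,R)+I_2(\lambda,R)$ at $s=R/\lambda$. Writing $\phi(s):=s^pf(s)$, which by (\ref{EE}) extends continuously at $0$ with $\phi(0)=\ell$ and is therefore bounded near the origin, the substitution $u=\lambda s$ in $I_1$ yields
$$
\lambda^{1-p}I_1(\lambda,R)=\int_0^R\e^{-\i u}u^{-p}\phi(u/\lambda)\,\d u.
$$
Since $u^{-p}$ is integrable on $[0,R]$ (this is where $p<1$ enters), dominated convergence gives $\lambda^{1-p}I_1(\lambda,R)\to\ell\int_0^R\e^{-\i u}u^{-p}\,\d u$ as $\lambda\to\infty$.

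For $I_2$ I integrate by parts on $[R/\lambda,\infty)$. The boundary term at infinity vanishes because $f\in L^1(\R^+)$, together with absolute continuity and $f'\in L^1$ near infinity, forces $f(s)\to 0$. What remains is $\e^{-\i R}f(R/\lambda)/(\i\lambda)$ plus $(\i\lambda)^{-1}\int_{R/\lambda}^\infty\e^{-\i\lambda s}f'(s)\,\d s$: by (\ref{EE}) the former is $O(\lambda^{p-1}R^{-p})$, and by (\ref{AA}) the latter is $O(\lambda^{p-1}R^{-p})$. Therefore $\lambda^{1-p}I_2(\lambda,R)=O(R^{-p})$ uniformly for $\lambda$ large. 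Letting $\lambda\to\infty$ first and then $R\to\infty$ closes the case $p\in(0,1)$.

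The case $p=0$ must be treated apart, since $O(R^{-p})=O(1)$ no longer decays and $\int_0^R\e^{-\i u}\,\d u$ fails to converge. Here (\ref{AA}) forces $f'\in L^1(\R^+)$, so $f$ extends continuously at $0$ with $f(0)=\ell$; a single integration by parts on $[0,\infty)$ combined with the classical Riemann--Lebesgue lemma applied to $f'$ gives $\lambda\hat f(\lambda)\to\ell/\i=\ell\,\m(0)$. I expect the main obstacle to be the outer estimate in the $p>0$ regime: neither (\ref{EE}) nor (\ref{AA}) by itself controls $I_2$, but their combination through integration by parts manufactures exactly the $R^{-p}$ decay required to absorb the error as $R\to\infty$.
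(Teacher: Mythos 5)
Your argument is correct, and it reaches the result by a genuinely different (and somewhat leaner) route than the paper. The paper splits at an \emph{adaptive} cutoff $\beta(\lambda)/\lambda$, where $\beta(\lambda)\to\infty$ is a step function built from the modulus $\omega_p(s)=\sup_{t\in(0,s)}|t^pf(t)-\ell|$ precisely so that $\beta(\lambda)\,\omega_p(\beta(\lambda)/\lambda)\to 0$; this permits a single limit in $\lambda$, with the inner model integral handled by the exact identity of Lemma \ref{L2} and the tail by the half-period averaging identity of Lemma \ref{L1} rather than a plain integration by parts. You instead freeze the cutoff at $R/\lambda$ and run an iterated limit ($\lambda\to\infty$ first, then $R\to\infty$): dominated convergence replaces the quantitative modulus estimate on the inner piece, and an ordinary integration by parts --- with \eqref{EE} controlling the boundary term $\e^{-\i R}f(R/\lambda)/(\i\lambda)$ and \eqref{AA} the derivative integral --- yields the tail bound $O(R^{-p})$. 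The two uniformity checks that make the iterated limit legitimate, namely that the implied constants come only from $\sup_{s<\delta}|s^pf(s)|$ and $\limsup_{x\to 0}x^p\int_x^\infty|f'(s)|\,\d s$ and are therefore independent of $R$, are implicit in your write-up but sound, as is the vanishing of $f$ at infinity needed to kill the other boundary term. Your $p=0$ case (one integration by parts over all of $\R^+$ plus the classical Riemann--Lebesgue lemma applied to $f'$) is in fact simpler than the paper's, which routes that case through Lemma \ref{L1} as well. What the paper's scheme buys is a single-limit argument with fully explicit error terms; what yours buys is the elimination of the $\beta(\lambda)$ construction and of Lemma \ref{L1} altogether.
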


\noindent
$\diamond$
If $p=0$, assumption \eqref{AA} merely means $f'\in L^1(\R^+)$.
Since $f$ is summable, this is always the case
when $f$ is monotone and \eqref{EE} holds.

\smallskip
\noindent
$\diamond$
If $p>0$, it is easily seen that \eqref{AA} is satisfied whenever the function
$s\mapsto s^{1+p}f'(s)$ is bounded in a neighborhood of zero, or whenever
$f$ is monotone in a neighborhood of zero and \eqref{EE} holds.

\section{Proof of Theorem \ref{UNO}}

\noindent
Two general lemmas will be needed.

\begin{lemma}
\label{L2}
Let $p\in(0,1)$. Then for any $\lambda,\beta>0$, we have the equality
$$\lambda^{1-p}
\int_0^{\frac\beta\lambda} \frac{\e^{-\i \lambda s}}{s^p}\, \d s = \m(p)
+\frac{\i\e^{-\i \beta}}{\beta^p}-\frac{\i p}{\beta^p} \int_1^{\infty } \frac{\e^{-\i \beta s}}{s^{1+p}}\,\d s
$$
\end{lemma}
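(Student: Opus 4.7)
The plan is to eliminate $\lambda$ from the left-hand side by the substitution $u=\lambda s$, which turns the integral into a purely $\beta$-dependent expression, and then to verify the claimed identity in that reduced form. Concretely, the substitution gives
$$\lambda^{1-p}\int_0^{\beta/\lambda}\frac{\e^{-\i\lambda s}}{s^p}\,\d s
=\int_0^{\beta}\frac{\e^{-\i u}}{u^p}\,\d u,$$
so the lemma reduces to proving the identity
$$\int_0^{\beta}\frac{\e^{-\i u}}{u^p}\,\d u=\m(p)+\frac{\i\e^{-\i\beta}}{\beta^p}-\frac{\i p}{\beta^p}\int_1^\infty\frac{\e^{-\i\beta s}}{s^{1+p}}\,\d s.$$

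Next, I would write $\int_0^\beta=\int_0^\infty-\int_\beta^\infty$ and handle the two pieces separately. The full integral $\int_0^\infty u^{-p}\e^{-\i u}\,\d u$ is the standard oscillatory Mellin integral; rotating the contour from the positive real axis to the negative imaginary axis (Jordan-type argument, valid for $0<p<1$) reduces it to the Euler Gamma integral and yields $\Gamma(1-p)\e^{-\i\pi(1-p)/2}=-\i\e^{\i p\pi/2}\Gamma(1-p)=\m(p)$. This accounts for the first term on the right.

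For the tail $\int_\beta^\infty u^{-p}\e^{-\i u}\,\d u$, I would integrate by parts with antiderivative $\i\e^{-\i u}$ of $\e^{-\i u}$, truncating at a large $R$ and then letting $R\to\infty$. The boundary term at $R$ vanishes because $R^{-p}\to 0$ (here $p>0$ is used), the boundary term at $\beta$ contributes $-\i\beta^{-p}\e^{-\i\beta}$, and the remaining integral equals $\i p\int_\beta^\infty u^{-1-p}\e^{-\i u}\,\d u$. Undoing the initial rescaling via $u=\beta s$ in this last expression converts it to $\frac{\i p}{\beta^p}\int_1^\infty s^{-1-p}\e^{-\i\beta s}\,\d s$. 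Assembling the two pieces produces exactly the right-hand side of the lemma.

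The only delicate step is the contour rotation used to evaluate $\int_0^\infty u^{-p}\e^{-\i u}\,\d u$ as a Gamma value, since the integrand is only conditionally convergent at infinity: one has to bound the contribution of the quarter-circle of radius $R$ in the fourth quadrant by a standard Jordan-lemma estimate (easy because $u^{-p}\to 0$) and similarly argue that the small quarter-circle near $0$ contributes $O(\varepsilon^{1-p})\to 0$. Every other step—the substitution, the splitting, and the integration by parts—is purely mechanical and requires no further ideas.
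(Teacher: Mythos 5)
Your proposal is correct and follows essentially the same route as the paper: evaluate $\int_0^\infty s^{-p}\e^{-\i s}\,\d s=\m(p)$ by a contour rotation, split off the tail, rescale, and integrate by parts (the paper merely rescales to $[1,\infty)$ before the integration by parts rather than after, which is immaterial). All your computations check out, including the identification $\Gamma(1-p)\e^{-\i\pi(1-p)/2}=\m(p)$ and the boundary terms in the integration by parts.
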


\begin{proof}
Via complex integration of the function $g(z)=z^{-p}\e^{-z}$ along the first quarter of the circumference of radius $R\to\infty$
(removing a small portion about zero),
one draws the equality
$$\int_0^{\infty } \frac{\e^{-\i s}}{s^p} \,\d s=\m(p).
$$
Hence, splitting the integral and
performing a change of variables, we are led to
$$
\lambda^{1-p}
\int_0^{\frac\beta\lambda} \frac{\e^{-\i \lambda s}}{s^p}\, \d s
=\m(p)
-\beta^{1-p} \int_1^{\infty } \frac{\e^{-\i \beta s}}{s^p}\, \d s.
$$
An integration by parts completes the argument.
\end{proof}

\begin{lemma}
\label{L1}
For any $\lambda,\alpha> 0$, we have the equality
\begin{align*}
\lambda^{1-p}
\int_{\alpha}^\infty\e^{-\i \lambda s} f(s)\,\d s
&=\frac{\lambda^{1-p}}2\int_\alpha^{\alpha+\frac\pi\lambda} \e^{-\i \lambda s}f(s)\, \d s
+\frac{\i \e^{-\i \lambda \alpha}}{2\lambda^p} \big[f(\alpha+{\textstyle\frac\pi\lambda})-f(\alpha)\big]\\
&\quad-\frac{\i}{\lambda^p} \int_\alpha^\infty \e^{-\i \lambda s}f'(s)\,\d s
+\frac{\i}{2\lambda^p} \int_\alpha^{\alpha+\frac\pi\lambda} \e^{-\i \lambda s}f'(s)\,\d s.
\end{align*}
\end{lemma}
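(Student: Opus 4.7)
The identity is purely algebraic, and the natural device is integration by parts, combined with the elementary phase-flip
$$\e^{-\i\lambda(\alpha+\frac\pi\lambda)}=\e^{-\i\lambda\alpha}\e^{-\i\pi}=-\e^{-\i\lambda\alpha},$$
which is exactly what produces the factors of $1/2$ and the particular combination of $f(\alpha+\frac\pi\lambda)$ and $f(\alpha)$ appearing on the right-hand side.

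First, I would integrate by parts on the tail integral. Since $f$ is absolutely continuous on $\R^+$ with $f\in L^1(\R^+)$ and $f'\in L^1(\alpha,\infty)$, the identity $f(s)=f(\alpha)+\int_\alpha^s f'(u)\,\d u$ forces $f$ to have a finite limit at infinity, which can only be $0$ by summability of $f$. Hence the boundary contribution at $\infty$ vanishes, yielding
$$\lambda^{1-p}\int_\alpha^\infty \e^{-\i\lambda s}f(s)\,\d s=-\frac{\i\e^{-\i\lambda\alpha}f(\alpha)}{\lambda^p}-\frac{\i}{\lambda^p}\int_\alpha^\infty \e^{-\i\lambda s}f'(s)\,\d s. \quad (\ast)$$
Next, I would integrate by parts on $\int_\alpha^{\alpha+\frac\pi\lambda}\e^{-\i\lambda s}f(s)\,\d s$, using the phase flip to evaluate the right-endpoint boundary term; the outcome is
$$\frac{\lambda^{1-p}}{2}\int_\alpha^{\alpha+\frac\pi\lambda}\e^{-\i\lambda s}f(s)\,\d s=-\frac{\i\e^{-\i\lambda\alpha}}{2\lambda^p}\bigl[f(\alpha+{\textstyle\frac\pi\lambda})+f(\alpha)\bigr]-\frac{\i}{2\lambda^p}\int_\alpha^{\alpha+\frac\pi\lambda}\e^{-\i\lambda s}f'(s)\,\d s.$$

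Substituting this last formula into the right-hand side of the claimed identity, the two $f'$-integrals over the short interval $[\alpha,\alpha+\frac\pi\lambda]$ cancel exactly (coefficients $-\i/(2\lambda^p)$ from the substitution versus $+\i/(2\lambda^p)$ as written), while the two boundary contributions combine into
$$\frac{\i\e^{-\i\lambda\alpha}}{2\lambda^p}\bigl[-f(\alpha+{\textstyle\frac\pi\lambda})-f(\alpha)+f(\alpha+{\textstyle\frac\pi\lambda})-f(\alpha)\bigr]=-\frac{\i\e^{-\i\lambda\alpha}f(\alpha)}{\lambda^p}.$$
Together with the remaining $-(\i/\lambda^p)\int_\alpha^\infty \e^{-\i\lambda s}f'(s)\,\d s$ from the right-hand side of the statement, this reproduces exactly the right-hand side of $(\ast)$, i.e.\ the left-hand side of the claim. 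Every step reduces to the fundamental theorem of calculus, so there is no serious obstacle; the only point meriting attention is $f(\infty)=0$, used once to discard the boundary term at infinity in $(\ast)$, and this is immediate from the standing hypotheses.
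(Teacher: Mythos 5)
Your proof is correct, but it follows a genuinely different route from the paper's. The authors never integrate by parts over the infinite interval: they apply the substitution $s\mapsto s+\frac\pi\lambda$ to the tail of the integral (exploiting the same phase flip $\e^{-\i\pi}=-1$ that you use) together with the fundamental theorem of calculus, which yields the intermediate identity
$$\int_{\alpha}^\infty\e^{-\i \lambda s} f(s)\,\d s
=\frac{1}2\int_\alpha^{\alpha+\frac\pi\lambda} \e^{-\i \lambda s}f(s)\, \d s
-\frac12\int_{\alpha}^\infty\e^{-\i \lambda s} \Big(\int_{s}^{s+\frac\pi\lambda}f'(\sigma)\,\d \sigma\Big)\,\d s,$$
and they then unravel the double integral by Fubini to produce the four stated terms. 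That derivation is ``forward'' (it generates the decomposition) and never invokes the behavior of $f$ at infinity. Your argument instead verifies the identity: you reduce both sides to the plain integration-by-parts formula $(\ast)$, which makes transparent that the lemma is really $(\ast)$ with the half-interval terms added and subtracted --- a reorganization the authors need so that each piece can be estimated separately later. The one extra ingredient you require, $f(\infty)=0$, you justify correctly from $f\in L^1(\R^+)$, absolute continuity, and $f'\in L^1(\alpha,\infty)$, so it costs nothing. The computations themselves --- the boundary term $-\frac{\i\e^{-\i\lambda\alpha}}{2\lambda^p}\bigl[f(\alpha+\frac\pi\lambda)+f(\alpha)\bigr]$ produced by the phase flip, the exact cancellation of the two short $f'$-integrals, and the recombination of the boundary contributions into $-\frac{\i\e^{-\i\lambda\alpha}f(\alpha)}{\lambda^p}$ --- all check out.
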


\begin{proof}
By direct calculations, we derive the identity
$$\int_{\alpha}^\infty\e^{-\i \lambda s} f(s)\,\d s
=\frac{1}2\int_\alpha^{\alpha+\frac\pi\lambda} \e^{-\i \lambda s}f(s)\, \d s
-\frac12\int_{\alpha}^\infty\e^{-\i \lambda s} \bigg(\int_{s}^{s+\frac\pi\lambda}f'(\sigma)\,\d \sigma\bigg)\,\d s,
$$
and the conclusion follows from the Fubini theorem.
\end{proof}

Besides, assuming~\eqref{EE} true for some $p\in[0,1)$, we introduce the nondecreasing function
vanishing at zero
$$\omega_p(s)=\sup_{t\in(0,s)}\big|t^p f(t)-\ell\big|.$$
We shall treat separately two cases.

\subsection*{Case $\boldsymbol{p>0}$}
Define the function $\beta$ on $\R^+$ as follows:
setting $\lambda_0=0$,
select a strictly increasing sequence $\lambda_n\geq n^2$ in such a way that
$$\omega_p\big({\textstyle \frac{n}{\lambda_n}}\big)\leq \frac1{n^2}.
$$
Then, put
$$\beta(\lambda)=n\quad\text{for }\,\lambda\in [\lambda_n,\lambda_{n+1}).
$$
By construction, the (nonnegative) function $\beta$ is nondecreasing with
$$\lim_{\lambda\to\infty} \beta(\lambda)=\infty.$$
Besides
$$\lim_{\lambda\to\infty}\beta(\lambda)\omega_p\big({\textstyle \frac{\beta(\lambda)}{\lambda}}\big)=0,
$$
and
$$
\lim_{\lambda\to\infty}\frac{\beta(\lambda)}{\lambda}=0.$$
Indeed, if $n\geq 1$ and $\lambda\in [\lambda_n,\lambda_{n+1})$, we have that
$$\beta(\lambda)\omega_p\big({\textstyle \frac{\beta(\lambda)}{\lambda}}\big)=n\omega_p\big({\textstyle \frac{n}{\lambda}}\big)
\leq n\omega_p\big({\textstyle \frac{n}{\lambda_n}}\big)\leq \frac1n,
$$
and
$$\frac{\beta(\lambda)}{\lambda}=\frac{n}{\lambda}\leq \frac{n}{\lambda_n}\leq\frac1n.
$$
At this point, we write
$$\lambda^{1-p}
\hat f(\lambda)=
\I_1(\lambda)+\I_2(\lambda)+\I_3(\lambda),$$
where
\begin{align*}
\I_1(\lambda) &=\ell\lambda^{1-p}
\int_0^{\frac{\beta(\lambda)}\lambda}\frac{\e^{-\i \lambda s}}{s^p}\,\d s,\\
\I_2(\lambda) &=\lambda^{1-p}
\int_0^{\frac{\beta(\lambda)}\lambda}\e^{-\i \lambda s}\, \frac{s^p f(s)-\ell}{s^p} \,\d s,\\
\noalign{\vskip2mm}
\I_3(\lambda) &=\lambda^{1-p}
\int_{\frac{\beta(\lambda)}\lambda}^\infty\e^{-\i \lambda s} f(s)\,\d s.
\end{align*}

\noindent
$\bullet$ From Lemma \ref{L2} it is clear that
$$\lim_{\lambda\to\infty} \I_1(\lambda)=\ell\,\m(p).
$$

\smallskip
\noindent
$\bullet$ By direct calculations, for all $\lambda\geq\lambda_1$,
$$|\I_2(\lambda)|\leq \lambda^{1-p} \omega_p\big({\textstyle \frac{\beta(\lambda)}{\lambda}}\big)
\int_0^{\frac{\beta(\lambda)}\lambda}\frac1{s^p}\,\d s
\leq \frac1{1-p} \beta(\lambda)\omega_p\big({\textstyle \frac{\beta(\lambda)}{\lambda}}\big)\to 0
$$
as $\lambda\to\infty$.

\smallskip
\noindent
$\bullet$ From Lemma \ref{L1} with $\alpha=\frac{\beta(\lambda)}\lambda$, we get
\begin{align*}
\I_3(\lambda)
&=\frac{\lambda^{1-p}}2\int_{\frac{\beta(\lambda)}\lambda}^{\frac{\beta(\lambda)}\lambda+\frac\pi\lambda} \e^{-\i \lambda s}f(s)\, \d s
+\frac{\i \e^{-\i \beta(\lambda)}}{2\lambda^p}
\Big[f\big({\textstyle \frac{\beta(\lambda)}\lambda+\frac\pi\lambda}\big)-f\big({\textstyle\frac{\beta(\lambda)}\lambda}\big)\Big]\\
&\quad-\frac{\i}{\lambda^p} \int_{\frac{\beta(\lambda)}\lambda}^\infty \e^{-\i \lambda s}f'(s)\,\d s
+\frac{\i}{2\lambda^p} \int_{\frac{\beta(\lambda)}\lambda}^{\frac{\beta(\lambda)}\lambda
+\frac\pi\lambda} \e^{-\i \lambda s}f'(s)\,\d s.
\end{align*}
We now estimate the four terms in the right-hand side.
For $\lambda$ sufficiently large,
\begin{align*}
\frac{\lambda^{1-p}}2\bigg|\int_{\frac{\beta(\lambda)}\lambda}^{\frac{\beta(\lambda)}\lambda+\frac\pi\lambda}
\e^{-\i \lambda s}f(s)\, \d s\bigg|
&\leq |\ell|\lambda^{1-p}\int_{\frac{\beta(\lambda)}\lambda}^{\frac{\beta(\lambda)}\lambda+\frac\pi\lambda} \frac1{s^p}\, \d s\\
&=\frac{|\ell|}{1-p}[\beta(\lambda)]^{1-p}\Big[\big(1+{\textstyle\frac\pi{\beta(\lambda)}}\big)^{1-p}-1\Big],
\end{align*}
which, recalling that $\beta(\lambda)\to\infty$, converges to zero as $\lambda\to\infty$.
Concerning the second term, its modulus is readily seen to be bounded by
$$\frac{1}{2\lambda^p}
\Big[\big|f\big({\textstyle \frac{\beta(\lambda)}\lambda+\frac\pi\lambda}\big)\big|
+\big|f\big({\textstyle\frac{\beta(\lambda)}\lambda}\big)\big|\Big]\sim \frac{|\ell|}{[\beta(\lambda)]^{p}}\to 0.$$
Finally, the two remaining terms are controlled by
$$\frac{3}{2\lambda^p} \int_{\frac{\beta(\lambda)}\lambda}^\infty|f'(s)|\,\d s
=\frac{3}{2[\beta(\lambda)]^p} \bigg(\frac{\beta(\lambda)}\lambda\bigg)^p
\int_{\frac{\beta(\lambda)}\lambda}^\infty|f'(s)|\,\d s\to 0
$$
on account of \eqref{AA}.
This finishes the proof of the case $p>0$.
\qed

\subsection*{Case $\boldsymbol{p=0}$}
Extending by continuity $f$ at zero, using Lemma \ref{L1} and letting then $\alpha\to 0$ we get
\begin{align*}
\lambda\hat f(\lambda)
&=\frac{\lambda}2\int_0^{\frac\pi\lambda} \e^{-\i \lambda s}f(s)\, \d s
+\frac{\i}{2} \big[f({\textstyle\frac\pi\lambda})-\ell\big]\\
&\quad-\i\int_0^\infty \e^{-\i \lambda s}f'(s)\,\d s
+\frac{\i}{2} \int_0^{\frac\pi\lambda} \e^{-\i \lambda s}f'(s)\,\d s.
\end{align*}
It is clear from the continuity of $f$ and the Riemann-Lebesgue lemma
that the last three terms in the right-hand side above go to zero as $\lambda\to\infty$.
Moreover, as $\m(0)=-\i$,
$$\frac{\lambda}2\int_0^{\frac\pi\lambda} \e^{-\i \lambda s}f(s)\, \d s
=\ell\,\m(0) +\frac{\lambda}2\int_0^{\frac\pi\lambda} \e^{-\i \lambda s}\big[f(s)-\ell]\, \d s,$$
and
$$\bigg|\frac{\lambda}2\int_0^{\frac\pi\lambda} \e^{-\i \lambda s}\big[f(s)-\ell]\, \d s\bigg|
\leq\frac{\pi}2 \omega_0\big({\textstyle \frac{\pi}{\lambda}}\big)\to 0
$$
when $\lambda\to\infty$.
\qed

\section{Application to Equations with Memory}

\noindent
In this second part,
we discuss an application to equations with memory.
To this end, let
$A$ be a strictly positive unbounded linear operator on
a separable real Hilbert space,
with inverse $A^{-1}$ not necessarily compact.
We consider the linear equation with memory
$$
\partial_{tt} u (t) + A u(t)+\int_0^\infty
\mu(s)[u(t)-u(t-s)]\d s =0,
$$
where
$u(0)$ and $\partial_t u(0)$, as well as the past history $u(-s)_{|s> 0}$,
are assigned initial data.
The nonnegative function $\mu\not\equiv 0$, called memory kernel, is supposed to be nonincreasing, absolutely
continuous and summable on $\R^+$ (hence $\mu'$ is summable at infinity).
A concrete version of the equation, where $A$ is a particular realization of the Laplacian,
arises
in the theory of hereditary
electromagnetism, and serves as a model for the
evolution of the
electromagnetic field in the ionosphere (see~\cite{GNPbis} for more details).

The well-posedness of the related Cauchy problem and the asymptotic behavior of the solutions have been studied
in~\cite{GNPbis,RNV}.
There, by introducing in the same spirit
of~\cite{DAF} the auxiliary ``memory variable", the equation is shown to generate a contraction semigroup
$S(t)= \e^{t\mathbb{\AA}}$ acting
on a suitable Hilbert space accounting for the presence of the memory. Besides,
such a semigroup fails to be exponentially stable.
Nevertheless, if the inclusion
$\i\R\subset \rho(\AA)$ holds\footnote{$\rho(\AA)$ denotes the resolvent set of the (complexification of the)
infinitesimal generator $\AA$.}, in the light of the works
of Batty and coauthors~\cite{AB,Bat}
the semigroup $S(t)$ turns out to be {\it semiuniformly stable}, i.e.\
$$
\lim_{t\to\infty}\| S(t) \AA^{-1}\|= 0.
$$
In addition, the very same calculations needed to prove the
lack of exponential stability provide
an estimate from below
of the norm of the resolvent operator $(\i\lambda - \AA)^{-1}$.
Indeed, the following result holds (see \cite{DD,RN}).

\begin{theorem}
\label{DAN}
Assume that $\mu$ satisfies for some $\delta>0$ the classical condition~\cite{DAF}
$$
\mu'(s)+\delta\mu(s)\leq 0,\quad\forall s\in\R^+.
$$
Then
$\i\R\subset \rho(\AA)$, implying that $S(t)$ is semiuniformly stable. Besides,
$$
\limsup_{\lambda\to\infty}\frac{|\hat{\mu}(\lambda)|}{\lambda}\|(\i\lambda - \AA)^{-1}\|>0,
$$
where $\hat \mu$ is the half Fourier transform of $\mu$.
\end{theorem}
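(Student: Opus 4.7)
Following \cite{DAF}, I would first recast the second-order memory equation as a first-order evolution on the augmented energy space $\H=D(A^{1/2})\times H\times\L$, with $\L=L^2_\mu(\R^+;D(A^{1/2}))$, by introducing the past-history variable $\eta(s,t)=u(t)-u(t-s)$. Writing $U=(u,\partial_t u,\eta)$, the generator reads
$$\AA(u,v,\eta)=\Big(v,\,-Au-\int_0^\infty\mu(s)\eta(s)\,\d s,\,v-\partial_s\eta\Big),$$
and an integration by parts in $s$ combined with the Dafermos condition yields
$$\mathrm{Re}\,\langle\AA U,U\rangle_\H=\tfrac12\int_0^\infty\mu'(s)\|A^{1/2}\eta(s)\|^2\,\d s\leq-\tfrac\delta2\int_0^\infty\mu(s)\|A^{1/2}\eta(s)\|^2\,\d s,$$
so $\AA$ is dissipative and $S(t)=\e^{t\AA}$ is contractive.

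To establish $\i\R\subset\rho(\AA)$, I would argue by contradiction: assuming $\i\lambda$ with $\lambda\in\R$ lies in the approximate point spectrum, pick $U_n=(u_n,v_n,\eta_n)\in D(\AA)$, $\|U_n\|_\H=1$, with $(\i\lambda-\AA)U_n\to 0$. Pairing against $U_n$ and taking real parts, the dissipation above forces $\eta_n\to 0$ in $\L$; integrating the ODE $\partial_s\eta_n+\i\lambda\eta_n=v_n+o(1)$ with $\eta_n(0)=0$ then shows $v_n\to 0$, and the first component $\i\lambda u_n-v_n\to 0$ gives $u_n\to 0$, contradicting unit norm. A standard closed-range/duality argument for dissipative operators rules out residual spectrum, so $\i\R\subset\rho(\AA)$; semiuniform stability then follows from the Arendt-Batty-Lyubich-Vu theorem.

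For the resolvent lower bound, the plan is to test $(\i\lambda-\AA)^{-1}$ on a hand-crafted trial vector inspired by the reduction of the resolvent equation to a scalar problem. Using that $\sigma(A)$ is unbounded, pick $\sigma_n\in\sigma(A)$ with $\sigma_n\to\infty$ and almost-eigenvectors $u_n\in D(A)$, $\|u_n\|=1$, $\|(A-\sigma_n)u_n\|\to 0$; choose the sequence so that $\hat\mu(\lambda_n)\neq 0$ (possible since $\hat\mu\not\equiv 0$). Set $\lambda_n=\sqrt{\sigma_n+\|\mu\|_{L^1}}$ and
$$U_n=\big(u_n,\,\i\lambda_n u_n,\,u_n(1-\e^{-\i\lambda_n s})\big).$$
By design, the first and third components of $(\i\lambda_n-\AA)U_n$ vanish identically, while the second reduces, via $\int_0^\infty\mu(s)(1-\e^{-\i\lambda_n s})\,\d s=\|\mu\|_{L^1}-\hat\mu(\lambda_n)$, to
$$(A-\sigma_n)u_n-\hat\mu(\lambda_n)u_n.$$
Routine norm computations then give $\|U_n\|_\H\sim\lambda_n$ and $\|(\i\lambda_n-\AA)U_n\|_\H\sim|\hat\mu(\lambda_n)|$, whence
$$\|(\i\lambda_n-\AA)^{-1}\|\geq\frac{\|U_n\|_\H}{\|(\i\lambda_n-\AA)U_n\|_\H}\gtrsim\frac{\lambda_n}{|\hat\mu(\lambda_n)|},$$
which delivers the claimed limsup bound.

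The hard part will be making the contradiction argument for $\i\R\subset\rho(\AA)$ fully rigorous: since $A^{-1}$ is not assumed compact, $\AA$ need not have compact resolvent, so one cannot pass to strongly convergent subsequences and must instead track the coupling between $u_n$, $v_n$, $\eta_n$ through the Dafermos dissipation alone; the ODE step for $\eta_n$ in the weighted space $\L$ is the most delicate point.
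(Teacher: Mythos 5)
The paper does not actually prove Theorem \ref{DAN}: it is quoted as a known result and the reader is referred to \cite{DD,RN}. So there is no in-paper argument to compare against; what you have written is a reconstruction of the standard proof from those references, and in outline it is correct: Dafermos history framework, dissipativity plus an approximate-eigenvector contradiction for $\i\R\subset\rho(\AA)$ (with Arendt--Batty for semiuniform stability), and an explicit quasimode $U_n=(u_n,\i\lambda_n u_n,u_n(1-\e^{-\i\lambda_n s}))$ with $\lambda_n^2=\sigma_n+\|\mu\|_{L^1}$, which indeed annihilates the first and third components of $(\i\lambda_n-\AA)U_n$ and reduces the second to $(A-\sigma_n)u_n-\hat\mu(\lambda_n)u_n$. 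Two points need repair. First, for \emph{this} equation the memory term is $\int_0^\infty\mu(s)\eta(s)\,\d s$ with no $A$ in it, so the history space must be $L^2_\mu(\R^+;H)$, not $L^2_\mu(\R^+;D(A^{1/2}))$: with your choice the cross terms in $\mathrm{Re}\,\langle\AA U,U\rangle_\H$ do not cancel and the generator is not dissipative; the correct dissipation is $\tfrac12\int_0^\infty\mu'(s)\|\eta(s)\|_H^2\,\d s$. (Your norm count $\|U_n\|_\H\sim\lambda_n$ survives, since $\|A^{1/2}u_n\|^2\sim\sigma_n\sim\lambda_n^2$ already dominates.) Second, the justification ``$\hat\mu(\lambda_n)\neq0$ is possible since $\hat\mu\not\equiv0$'' is not enough: $\lambda_n$ is pinned to $\sigma(A)$, so you cannot dodge a zero set by choice. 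The gap is easily closed, but by a different observation: since $\mu\geq0$ is nonincreasing and $\mu\not\equiv0$, one has $\mathrm{Im}\,\hat\mu(\lambda)=-\int_0^\infty\mu(s)\sin(\lambda s)\,\d s<0$ for every $\lambda>0$ (sum the integral over periods), hence $\hat\mu$ never vanishes on $(0,\infty)$ and the quasimode estimate $\|(\i\lambda_n-\AA)U_n\|\lesssim|\hat\mu(\lambda_n)|$ is available at every $n$ once $\|(A-\sigma_n)u_n\|\leq|\hat\mu(\lambda_n)|$. Finally, your spectral argument as written handles $\lambda\neq0$ (the ODE step degenerates at $\lambda=0$); the point $0\in\rho(\AA)$ should be treated separately by solving the stationary problem directly, using the strict positivity of $A$.
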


The latter estimate turns out to be a crucial tool in establishing the optimal decay rate,
for it is known from~\cite{Bat} that if
$$\limsup_{\lambda\to\infty} \lambda^{-\alpha}\|(\i\lambda - \AA)^{-1}\|>0$$
for some $\alpha>0$, then $\|S(t)\AA^{-1}\|$ cannot decay faster than $t^{-\frac1{\alpha}}$.
Thus, the problem of finding sharp polynomial growth rates at infinity
of $\hat \mu$ becomes paramount.
As an immediate consequence of Theorem \ref{UNO}, we have a
notable ``quantitative version" of Theorem \ref{DAN}.

\begin{theorem}
\label{NS}
Within the hypotheses of Theorem~\ref{DAN}, assume there exist $p\in[0,1)$ and $\ell>0$ such that
$$
\lim_{s\to 0} s^p \mu(s)=\ell.
$$
Then
$$
\limsup_{\lambda\to\infty}\lambda^{p-2}\|(\i\lambda - \AA)^{-1}\|>0.
$$
Accordingly, $\|S(t)\AA^{-1}\|$ cannot decay faster than $t^{-\frac1{2-p}}$.
\end{theorem}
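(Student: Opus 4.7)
The plan is to apply Theorem \ref{UNO} directly to the memory kernel $\mu$, extract from it the precise first-order asymptotics of $\hat\mu(\lambda)$ at infinity, and then combine this with the resolvent lower bound of Theorem \ref{DAN} and the Batty-type decay criterion recalled immediately above the statement. The whole argument is essentially a short concatenation of results already in hand.

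First I would check that $\mu$ satisfies the hypotheses of Theorem \ref{UNO}. Summability and absolute continuity are part of the standing assumptions on memory kernels, and \eqref{EE} is exactly the new hypothesis on $\mu$. The only thing deserving a word of comment is \eqref{AA}, which I would derive from the monotonicity of $\mu$: being $\mu$ nonincreasing, summable and absolutely continuous, one has $\mu(\infty)=0$ and $\mu'\leq 0$ almost everywhere, so that
$$\int_x^\infty |\mu'(s)|\,\d s = -\int_x^\infty \mu'(s)\,\d s = \mu(x);$$
multiplying by $x^p$ and using \eqref{EE} gives $x^p\int_x^\infty|\mu'(s)|\,\d s\to\ell$ as $x\to 0$, which is in particular bounded near zero.

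With the hypotheses verified, Theorem \ref{UNO} yields $\lambda^{1-p}\hat\mu(\lambda)\to\ell\,\m(p)$ as $\lambda\to\infty$. The key observation is that $|\m(p)|=\Gamma(1-p)>0$ on $[0,1)$, so there exists a constant $C>0$ such that
$$\frac{|\hat\mu(\lambda)|}{\lambda}\leq C\,\lambda^{p-2}$$
for all sufficiently large $\lambda$. Inserting this upper bound into the estimate of Theorem \ref{DAN} gives
$$0<\limsup_{\lambda\to\infty}\frac{|\hat\mu(\lambda)|}{\lambda}\|(\i\lambda-\AA)^{-1}\|\leq C\limsup_{\lambda\to\infty}\lambda^{p-2}\|(\i\lambda-\AA)^{-1}\|,$$
which is the sought resolvent lower bound. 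The assertion on the optimal polynomial decay rate then follows by applying the Batty-type criterion recalled before the statement with $\alpha=2-p>0$, producing the rate $t^{-1/(2-p)}$.

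No substantive obstacle is expected: the heavy lifting has been done in proving Theorem \ref{UNO}. The only mildly nontrivial point is the derivation of \eqref{AA} from the structural assumptions on memory kernels, which collapses the total variation of $\mu$ on $(x,\infty)$ to $\mu(x)$ itself and thereby makes the quantitative Riemann–Lebesgue machinery automatically applicable in this setting.
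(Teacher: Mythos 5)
Your proof is correct and takes exactly the route the paper intends: Theorem \ref{NS} is stated there as an immediate consequence of Theorems \ref{UNO} and \ref{DAN}, and your verification of hypothesis \eqref{AA} via the monotonicity of $\mu$ (so that $\int_x^\infty|\mu'(s)|\,\d s=\mu(x)$) is precisely the observation recorded in the remarks following Theorem \ref{UNO}. The remaining steps --- boundedness of $\lambda^{1-p}|\hat\mu(\lambda)|$ from the convergence to $\ell\,\m(p)$, insertion into the resolvent estimate, and the Batty-type criterion with $\alpha=2-p$ --- are all carried out correctly.
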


\begin{remark}
In \cite{DD,RN} it is also shown that if the first limit above occurs for $p=0$, then $\|S(t)\AA^{-1}\|$
decays {\it at least} as $t^{-\frac1{2}}$. In which case, we conclude from Theorem~\ref{NS} that $t^{-\frac1{2}}$
is actually the optimal decay rate.
\end{remark}

The fact that Theorem~\ref{UNO} has not been considered in the earlier PDE literature is exemplified,
for instance, by the works \cite{DD,FSRQ,GRS,RN},
where the above quantitative estimates are carried out only for particular kernels of the form
$$
\mu(s) =s^{-p}\e^{-\delta s},
$$
with $\delta>0$ and $p\in[0,1)$, for which $\hat \mu$
can be explicitly computed.




\begin{thebibliography}{99}

\bibitem{AB}
{\au W. Arendt and C.J.K. Batty},
{\ti Tauberian theorems and stability of one-parameter semigroups},
{\jou  Trans.\ Amer.\ Math.\ Soc.}
\no{306}{837--852}{1988}

\bibitem{Bat}
{\au C.J.K. Batty and T. Duyckaerts},
{\ti Non-uniform stability for bounded semi-groups on Banach spaces},
{\jou J. Evol. Equ.}
\no{8}{765--780}{2008}

\bibitem{BGT}
{\au N.H. Bingham, C.M. Goldie and J.L. Teugels},
{\bk Regular variation},
\eds{John Wiley \& Sons}{New York}{1971}

\bibitem{DAF}
{\au C.M. Dafermos},
{\ti Asymptotic stability in viscoelasticity},
{\jou Arch.\ Rational Mech.\ Anal.}
\no{37}{297--308}{1970}

\bibitem{DD}
{\au V. Danese and F. Dell'Oro},
{\ti Lack of exponential stability for a class of
second-order systems with memory},
{\jou Proc.\ Roy.\ Soc.\ Edinburgh Sect.\ A}
(in press).

\bibitem{FSRQ}
{\au H.D. Fern\'andez Sare, J.E. Mu{\~n}oz Rivera and R. Quintanilla},
{\ti On the rate of decay in interacting continua with memory},
{\jou J.\ Differential Equations }
\no{251}{3583--3605}{2011}

\bibitem{GNPbis}
{\au C. Giorgi, M.G. Naso and V. Pata},
{\ti Energy decay of electromagnetic systems with memory},
{\jou Math.\ Models Methods Appl.\ Sci.}
\no{15}{1489--1502}{2005}

\bibitem{GRS}
{\au M. Grasselli, J.E. Mu{\~n}oz Rivera and M. Squassina},
{\ti Asymptotic behavior of a thermoviscoelastic plate with memory effects},
{\jou Asymptot.\ Anal.}
\no{63}{55--84}{2009}

\bibitem{RN}
{\au J.E. Mu\~noz Rivera and M.G. Naso},
{\ti Optimal energy decay rate for a class of weakly dissipative second-order systems with memory},
{\jou Appl.\ Math.\ Lett.}
\no{23}{743--746}{2010}

\bibitem{RNV}
{\au J.E. Mu\~noz Rivera, M.G. Naso and E. Vuk},
{\ti Asymptotic behavior of the energy for electromagnetic systems with memory},
{\jou Math.\ Methods Appl.\ Sci.}
\no{27}{819--841}{2004}

\bibitem{WID}
{\au D.V. Widder},
{\bk The Laplace Transform},
\eds{Princeton University Press}{Princeton, NJ}{1941}

\end{thebibliography}
\end{document}